\title{Homotopy idempotents on manifolds and Bass' conjectures}
\author{A\,J Berrick}
\address{Department of Mathematics\\
National University of Singapore\\\newline
Kent Ridge 117543\\
Singapore}
\email{berrick@math.nus.edu.sg}
\urladdr{}
\author{I Chatterji}
\address{Department of Mathematics\\
The Ohio State University\\\newline
231 W 18th Ave\\
Columbus OH 43210\\
USA}
\email{indira@math.ohio-state.edu}
\urladdr{}
\author{G Mislin}
\address{Department of Mathematics\\
ETH Z\"{u}rich\\\newline
8092 Z\"{u}rich\\
Switzerland}
\email{mislin@math.ethz.ch}
\urladdr{}
\def\cnewtheorem#1[#2]#3{\newtheorem{#1}{#3}[section]
\expandafter\let\csname c@#1\endcsname\c@Theorem}
\let\xysavmatrix\xymatrix
\def\xymatrix{\disablesubscriptcorrection\xysavmatrix}
\newtheorem{thm}{Theorem}
\newtheorem{con}{Conjecture}
\newtheorem{Theorem}{Theorem}[section]
\theoremstyle{definition}
\newcounter{fig}
\newcommand{\Hei}{{H}^{(2)}_i}
\begin{document}

\begin{htmlabstract}
The Bass trace conjectures are placed in the setting of homotopy
idempotent selfmaps of manifolds. For the strong conjecture,
this is achieved via a formulation of Geoghegan. The weaker form
of the conjecture is reformulated as a comparison of ordinary and
L<sup>2</sup>&ndash;Lefschetz numbers.
\end{htmlabstract}

\begin{abstract}
The Bass trace conjectures are placed in the setting
of homotopy idempotent 
selfmaps of manifolds. For the strong conjecture, this is achieved via a
formulation of Geoghegan. The weaker form of the conjecture is reformulated as
a comparison of ordinary and $L^{2}$--Lefschetz numbers.
\end{abstract}

\maketitle

\subsection*{Preface}

This note has its origins in talks discussing Bass' trace conjecture. After
one such lecture (by IC), R Geoghegan kindly mentioned his geometric
perspective on the matter. Then, when another of us (AJB) spoke about the
conjecture at the Kinosaki conference, he thought that a topological audience
might like to hear about that geometric aspect. Thus, it seemed desirable to
attempt to put the conjecture (and its weaker version) in a setting that would
be as motivating as possible to topologists. The result of that attempt
appears below.

\subsection*{Acknowledgement}
The authors warmly thank both R Geoghegan and B\,J Jiang for
their interest in this work. Second author partially supported
by the Swiss NSF grant PA002-101406 and USA NSF grant DMS 0405032.

\section{Introduction}
\label{sec:intro}

In 1976, H Bass \cite{B} conjectured that for any discrete group $G$,
the Hattori--Stallings trace of a finitely generated projective module
over the integral group ring of $G$ should be supported on the identity
component only.  Despite numerous advances (see, for example, Eckmann
\cite{E}, Emmanouil \cite{Emm}, and our earlier paper \cite{BCM}), this
conjecture remains open in general. In \cite{Geo:LNM}, R Geoghegan gave
the first topological interpretation, in terms of Nielsen numbers (stated
as \fullref{Geoghegan Theorem 4.1'} below). In the setting of selfmaps
on manifolds, this translates to the following.

\begin{thm}
\label{principal}The following are equivalent.

\begin{itemize}
\item [\rm(a)]The Bass conjecture is a theorem.

\item[\rm(b)] Every homotopy idempotent selfmap of a closed, smooth and oriented
manifold of dimension greater than $2$ is homotopic to one that has precisely
one fixed point.
\end{itemize}
\end{thm}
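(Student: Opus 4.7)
The plan is to derive Theorem 1.1 from Geoghegan's Theorem 4.1' (to be stated just below in the paper), which reformulates Bass' conjecture as a statement about Nielsen numbers of homotopy idempotent self-maps of finite CW complexes. Since the Nielsen number $N(f)$ is a homotopy invariant, and since Wecken's theorem guarantees, for closed manifolds of dimension $\geq 3$, that a map is homotopic to one with exactly $N(f)$ fixed points, the manifold condition ``precisely one fixed point'' is equivalent to ``$N(f)=1$.'' The task is then to bridge between the CW complex formulation of Geoghegan and the manifold formulation in (b).

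For (a) $\Rightarrow$ (b), I would take a homotopy idempotent $f\colon M\to M$ on a closed smooth oriented manifold of dimension greater than $2$, and form its mapping telescope $T(f)$. This is dominated by $M$, so it is finitely dominated, and its Wall finiteness obstruction yields a finitely generated projective $\mathbb{Z}G$–module $P$ (with $G=\pi_1 M$) whose Hattori--Stallings rank equals the generalized Lefschetz/Reidemeister trace of $f$. Assuming Bass, this trace is concentrated at the identity conjugacy class; equivalently, the Reidemeister trace has a single nonzero Nielsen class, whence $N(f)=1$. Wecken then gives a homotopy deformation to a map with exactly one fixed point.

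For (b) $\Rightarrow$ (a), the strategy is to upgrade an arbitrary homotopy idempotent $e\colon X\to X$ of a finite CW complex to one on a manifold. The natural move is to embed $X$ in $\mathbb{R}^N$ for $N$ large (depending on $\dim X$), choose a regular neighborhood $W$ which is a compact smooth manifold with boundary equipped with a deformation retraction $r\colon W\to X$, and double: let $M=DW$, a closed smooth oriented manifold. The composite $\tilde e = i\circ e\circ r$ (with $i\colon X\hookrightarrow W\subset M$) extends, after a collaring argument, to a homotopy idempotent $\hat e\colon M\to M$ whose fixed-point data corresponds to that of $e$ on $X$. Hypothesis (b) gives $N(\hat e)=1$, and tracing back through Geoghegan's theorem recovers the Hattori--Stallings statement for the idempotent of $\mathbb{Z}G$ determined by $e$.

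The main obstacle I expect is the second direction: one must verify that the thickening-and-doubling construction produces a genuine homotopy idempotent on $M$ rather than merely a map whose homotopy class is idempotent, and that the Nielsen/Reidemeister invariants of $\hat e$ on $M$ faithfully capture those of the original $e$ on $X$, with no new fixed-point classes created by the regular neighborhood or the doubling. Ensuring orientability and that the dimension exceeds $2$ (so Wecken applies on both sides) is then routine, but the careful bookkeeping of the Reidemeister trace through these geometric modifications is the delicate step.
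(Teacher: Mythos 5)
Your overall strategy matches the paper's: derive the result from Geoghegan's CW--formulation (stated as \fullref{Geoghegan Theorem 4.1'}), and transfer between complexes and manifolds using Wecken's theorem. The one genuine difference is the $(b)\Rightarrow(a)$ transfer: you propose embedding $X$ in $\mathbb{R}^N$, taking a regular neighborhood $W$, and doubling, whereas the paper performs surgery on $S^{2n}\to X$ (Wall's theorem) to produce a stably parallelizable $M$ with an $n$--connected retraction $r\colon M\to X$. Your construction is a reasonable, more elementary alternative, provided $N-\dim X\ge 3$ so that $\pi_1(\partial W)\to\pi_1(W)$ is an isomorphism and $\pi_1(DW)\cong\pi_1(X)$; once you have the homotopy retraction $X\to DW\to X$, Fadell's lemma (\fullref{Nielsens agree}) gives $N(\hat e)=N(e)$ exactly as in the paper's \fullref{complex to manifold}. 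You correctly flag this Nielsen-transfer step as the crux, and it is precisely Fadell's lemma that closes it.

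However, there are real gaps beyond that. First, a genuine error: you claim ``precisely one fixed point is equivalent to $N(f)=1$.'' This is false. The Bass conjecture only asserts that $\mathrm{HS}(w(f))$ is supported at $[e]$; the coefficient there can vanish, giving $N(f)=0$. A map with $N(f)=0$ is homotopic to a fixed-point-free map, so Wecken alone does not deliver ``precisely one fixed point.'' One needs the Schirmer-type construction (\fullref{Nielsen to unique fp}) which takes a fixed-point-free map on a closed manifold of dimension $\ge3$ and deforms it to one with an arbitrary unique fixed point; this handles $N(f)=0$ and also why the paper phrases the key equivalence as $N(f)\in\{0,1\}$. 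Second, in your $(a)\Rightarrow(b)$ sketch, a homotopy idempotent $f$ on $M$ need not induce the identity on $\pi_1(M)$: the telescope/splitting space $Y$ has $\pi_1(Y)\cong H:=f_\sharp(\pi_1 M)$, and the Wall element naturally lives in $K_0(\mathbb{Z}H)$, not $K_0(\mathbb{Z}\pi_1 M)$. You need the reduction lemma (\fullref{reduction}) to replace $f$ by a pointed idempotent on a finite complex with $\pi_1=H$ inducing the identity, before Geoghegan's Lemma (\fullref{GeogheganLemma}) applies. Third, Geoghegan's theorem concerns finitely presented groups, whereas statement (a) is about all groups; the passage requires Bass' reduction to finitely presented groups (\fullref{GeoBass}), which your proposal omits. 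None of these is fatal to the strategy, but each is a necessary ingredient that must be supplied.
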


Throughout this paper we use the word ``closed'' to refer to a connected,
compact manifold without boundary. Background material on homotopy
idempotents and related invariants will be discussed in Sections \ref{hi} and \ref{invar CW}. A weaker version of Bass' conjecture
amounts to saying that for any group $G$, the coefficients of the non-identity
components of the Hattori--Stallings trace of a finitely generated projective
module over the integral group ring of $G$ should sum to zero (and not
necessarily be individually zero).

\begin{thm}
\label{weakelyprincipal}The following are equivalent.

\begin{enumerate}
\item[\rm(a)]The weak Bass conjecture is a theorem.

\item[\rm(b)]Every pointed homotopy idempotent selfmap of a closed, smooth and
oriented manifold inducing the identity on the fundamental group, has its
Lefschetz number equal to the $L^{2}$--Lefschetz number of the induced map on
its universal cover.
\end{enumerate}
\end{thm}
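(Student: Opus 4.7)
The plan is to reduce both (a) and (b) to the vanishing, for every discrete group $G$ and every finitely generated projective $\mathbb{Z}G$--module $P$, of the single quantity
\[
\epsilon(P) \;=\; \mathrm{aug}(r_{P}) - r_{P}([e]) \;=\; \sum_{[g]\neq [e]} r_{P}([g]),
\]
where $r_{P}\in \bigoplus_{[g]}\mathbb{Z}$ is the Hattori--Stallings rank. By definition, the weak Bass conjecture is the assertion that $\epsilon(P)=0$ for every $(G,P)$, so (a) is equivalent to the vanishing of $\epsilon$. The work is to show that (b) is equivalent to the same statement by comparing Lefschetz invariants of pointed homotopy idempotents on closed manifolds with $r_{P}$.

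For $(a)\Rightarrow(b)$, I would start with a pointed homotopy idempotent $f\colon M\to M$ inducing the identity on $G=\pi_{1}M$, and lift to $\tilde f\colon \tilde M\to \tilde M$. Since $f^{2}\simeq f$, the induced cellular chain map $C_{*}(\tilde f)$ on the finitely generated free $\mathbb{Z}G$--chain complex $C_{*}(\tilde M)$ is chain homotopy idempotent; standard manipulations in $K_{0}(\mathbb{Z}G)$ (the same used for Wall's finiteness obstruction) associate to this data a finitely generated projective $\mathbb{Z}G$--module $P$, whose Hattori--Stallings rank $r_{P}$ equals the equivariant (Reidemeister) Lefschetz invariant $\sum (-1)^{i}\,\mathrm{tr}\bigl(C_{i}(\tilde f)\bigr)\in \mathbb{Z}G/[\mathbb{Z}G,\mathbb{Z}G]$. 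Applying the augmentation to this identity recovers the classical Hopf trace formula, giving $L(f)=\mathrm{aug}(r_{P})$; applying instead the $L^{2}$--trace (or equivalently, using Lück's formula for $L^{2}$--Betti numbers of finite CW complexes) gives $L^{(2)}(\tilde f)=r_{P}([e])$. Hence $L(f)-L^{(2)}(\tilde f)=\epsilon(P)$, which vanishes under (a).

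For $(b)\Rightarrow(a)$, the input needed is a geometric realization: given any $(G,P)$ there exists a closed smooth oriented manifold $M$ with $\pi_{1}M=G$ and a pointed homotopy idempotent $f\colon M\to M$ inducing the identity on $\pi_{1}$ whose associated projective module (as in the previous paragraph) is $P$. This is the same realization principle that drives \fullref{principal}, produced by realizing $G$ by a finite $2$--complex, thickening into a high dimensional closed smooth manifold, and spreading an idempotent matrix representative of $P$ over this manifold in the style of Geoghegan. A preliminary reduction to finitely presented $G$ is harmless, since any projective $P$ is defined by an idempotent matrix whose entries lie in a finitely generated (hence a quotient of a finitely presented) subgroup, and Bass--type conjectures are known to descend along such reductions. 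With the realization in hand, (b) applied to such $(M,f)$ and the identifications above yield $\epsilon(P)=0$ for every $(G,P)$.

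The principal technical point, and the step I expect to require the most care, is the chain--level identification of the equivariant Lefschetz trace of a homotopy idempotent with the Hattori--Stallings rank of its associated projective module, together with the verification that the geometric realization of the preceding paragraph is surjective onto all pairs $(G,P)$. Once these are in place, the equivalence of (a) and (b) follows formally from decomposing $r_{P}$ into its augmentation and identity components.
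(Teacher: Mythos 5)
Your proposal is essentially correct and follows the same overall strategy as the paper: reduce the geometric statement to the identity $\epsilon(w(f))=\kappa(w(f))$ for the projective class attached to a homotopy idempotent, and then use a realization theorem plus the reduction to finitely presented groups to pass back to the algebraic conjecture. The difference is in how the central identity is decomposed. The paper introduces the Hastings--Heller splitting $X\to Y\to X$ and inserts the intermediate quantities $\chi(Y)$ and $\chi^{(2)}(Y)$: Lemma 7.1 shows $\chi(Y)=\epsilon(w(Y))$ and $\chi^{(2)}(Y)=\kappa(w(Y))$ directly at the chain level, and Lemma 7.2 identifies $L(f)=\chi(Y)$ and $L^{(2)}(\tilde f)=\chi^{(2)}(Y)$ using only the cyclicity $\mathrm{tr}(vu)=\mathrm{tr}(uv)$ on homology, together with the Euler--Poincar\'e formula for von Neumann dimension. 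You bypass $Y$ and work directly on the cellular chain complex of $\tilde M$: the chain-homotopy idempotent splits algebraically (Ranicki-style) to give $P$, and its Hattori--Stallings rank equals the equivariant Lefschetz trace $\sum(-1)^i\mathrm{tr}(C_i(\tilde f))$ --- this is precisely Geoghegan's identity $\mathrm{HS}(w(f))=R(f,x)$, which the paper invokes for Theorem 1 but not here. Applying the augmentation and Kaplansky traces then requires the full $L^2$ Hopf trace formula (chain-level alternating traces equal homology-level alternating traces for $\mathcal{N}G$-complexes), which is a theorem of L\"uck; the paper needs only the special case for the identity map. Both routes are valid; yours is a touch more compact but leans slightly harder on the functional-analytic Hopf formula, and you correctly flag the realization of all of $K_0(\mathbb{Z}G)$ by manifold idempotents (\fullref{all elts obstructions}, \fullref{unreduced Wall}, \fullref{mani}) as the step requiring real work. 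One small notational caution: you reuse $\epsilon$ both for the augmentation trace and for your difference function $\mathrm{aug}(r_P)-r_P([e])$; the latter is what needs to vanish, not the former.
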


Background material regarding $L^{2}$--Lefschetz numbers is explained in \fullref{Lef}. The implication $\mathrm{(a)} \Rightarrow \mathrm{(b)}$ had already been observed by Eckmann in \cite{EN} in a slightly different form. The proofs of these two theorems proceed as follows. \fullref{principal} is derived from the analogous statement for finite
CW--complexes (involving Nielsen numbers) from Geoghegan's work,
which is explained in \fullref{invar CW}.
The transition from CW--complexes to manifolds is done in \fullref{CW->M}. The proof of \fullref{principal} as well as some applications is discussed in \fullref{prfThm1}. For \fullref{weakelyprincipal} the strategy is
somewhat similar: namely, we first prove the statements for
finitely presented groups instead of arbitrary groups and for
finite complexes instead of manifolds (see \fullref{Proof 2}).
To deduce Bass'
conjectures (weak and classical) for arbitrary groups we use a
remark due to Bass (\fullref{GeoBass}).

\section{Review of Bass' conjectures}
\label{Rewiew}

We briefly recall Bass' conjectures. Let $\mathbb{Z}G$ denote the
integral group ring of a group $G$. The \emph{augmentation trace} is
the $\mathbb{Z}$--linear map
$$\epsilon\co\mathbb{Z}G\rightarrow\mathbb{Z},\quad g\mapsto1$$
induced by the trivial group homomorphism on $G$. Writing $[\mathbb{Z}
G,\mathbb{Z}G]$ for the additive subgroup of $\mathbb{Z}G$  generated by
the elements $gh-hg$ ($g,h\in G$), we identify $\mathbb{Z}G/[\mathbb{Z}
G,\mathbb{Z}G]$ (the Hochschild homology group $HH_{0}(\mathbb{Z}G) $)
with $\bigoplus_{[s]\in\lbrack G]}\mathbb{Z}\cdot\lbrack s]$, where
$[G]$ is the set of conjugacy classes $[s]$ of elements $s$ of $G$. The
\emph{Hattori--Stallings trace} of $M=\sum_{g\in G}m_{g}g\in\mathbb{Z}G$
is then defined by
\begin{align*}
\mathrm{HS}(M)  &  =M+[\mathbb{Z}G,\mathbb{Z}G]\\
&  =\sum_{[s]\in\lbrack G]}\epsilon_{s}(M)[s]
\ \in\bigoplus_{\lbrack s]\in\lbrack G]}\mathbb{Z}\cdot\lbrack s],
\end{align*}
where for $[s]\in\lbrack G]$, $\epsilon_{s}(M)=\sum_{g\in\lbrack
s]} m_{g}$ is a partial augmentation. In particular, the component
$\epsilon_{e}$ of the identity element $e\in G$ in the Hattori--Stallings
trace is known as the \emph{Kaplansky trace}
$$\kappa\co\mathbb{Z}G\rightarrow\mathbb{Z},
\quad\sum m_{g}g\mapsto m_{e}.$$
Now, an element of $K_{0}(\mathbb{Z}G)$ is represented by a difference of
finitely generated projective $\mathbb{Z}G$--modules, each of which is
determined by an idempotent matrix having entries in $\mathbb{Z}G$. Combining
the usual trace map to $\mathbb{Z}G$ of such a matrix with any of the above
traces on $\mathbb{Z}G$ turns out to induce a well-defined trace map on
$K_{0}(\mathbb{Z}G)$ that is given the same name and notation as before.
Moreover, $\mathrm{HS}$ and $\epsilon$ are natural with respect to all
group homomorphisms (and $\kappa$ with respect to group monomorphisms). In the
case of a free module $\mathbb{Z}G^{n}$, $\epsilon$ takes the value $n$ and
so is just the rank of the module.

In \cite{B}, Bass conjectured the following.

\begin{con}[Classical Bass conjecture]
\label{Bass}
For any group $G$, the induced map
\[
\mathrm{HS}\co K_{0}(\mathbb{Z}G)\rightarrow\bigoplus_{\lbrack s]\in\lbrack
G]}\mathbb{Z}\cdot\lbrack s]
\]
has image in $\mathbb{Z}\cdot\lbrack e]$.
\end{con}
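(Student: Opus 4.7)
The Classical Bass Conjecture remains open, so a proof proposal here is necessarily a strategy rather than a complete argument. My plan is to follow the topological reformulation developed in this paper. First, I would translate the algebraic assertion into a geometric one: every finitely generated projective $\zg$--module $P$ is the image of an idempotent matrix over $\zg$, and by Wall's finiteness obstruction theory this idempotent is realized by a homotopy idempotent selfmap $f$ of a finite CW--complex $X$ with $\pi_{1}(X)=G$. The Hattori--Stallings trace of $[P]$ records the contributions of the Nielsen fixed point classes of $f$: the coefficient $\epsilon_{s}([P])$ corresponds to the total fixed point index of the essential Nielsen class labeled by $[s]$. In this dictionary, the assertion $\mathrm{HS}([P])\in \bbZ\cdot[e]$ becomes the statement that only the identity Nielsen class can be essential for a homotopy idempotent selfmap.

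Second, by \fullref{principal} (which I may assume) the conjecture is equivalent to showing that every homotopy idempotent selfmap of a closed, smooth, oriented manifold of dimension greater than $2$ is homotopic to one with a single fixed point. On such manifolds, Wecken-type techniques allow cancellation of fixed points within each Nielsen class, so the real task is to exclude essential non-identity Nielsen classes. A natural angle would be to exploit the relation $f\circ f\simeq f$: a fixed point $x$ of $f$ lying in class $[s]$ is also a fixed point of $f\circ f$, which contributes to the class $[s^{2}]$; comparing the Nielsen data of $f$ and $f\circ f$ should impose algebraic constraints that force $s=e$.

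The main obstacle is precisely this last comparison, which is the full content of the conjecture. No elementary manipulation of the idempotency $f\simeq f\circ f$ is known to force $[s]=[e]$ for every essential class. The best partial results (Eckmann \cite{E}, Emmanouil \cite{Emm}, and our earlier paper \cite{BCM}) establish the conjecture for various families of groups via $L^{2}$--methods, linearity hypotheses, or cohomological dimension, but a uniform argument valid for all discrete $G$ appears to require genuinely new input. Accordingly, the concrete output of the present proposal is the pair of reformulations \fullref{principal} and \fullref{weakelyprincipal}, which provide the geometric setting in which future progress on Bass' conjectures may most naturally be pursued.
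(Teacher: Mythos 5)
The statement is an open conjecture that the paper reformulates rather than proves, and you correctly recognize this at the outset. Your sketch of the dictionary --- Wall's obstruction theory realizing projective modules by homotopy idempotents, Geoghegan's identification of the Hattori--Stallings trace with the Reidemeister trace (\fullref{GeogheganLemma}), and the passage to manifolds via \fullref{principal} --- faithfully reproduces the paper's own reformulation strategy, so there is nothing to fault beyond what you already acknowledge: that forcing essential Nielsen classes to concentrate at the identity remains the unsolved core of the problem.
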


\begin{con}[Weak Bass conjecture]
\label{WeakBass}
For any group $G$, the induced maps
\[
\epsilon,\kappa\co K_{0}(\mathbb{Z}G)\rightarrow\mathbb{Z}%
\]
coincide.
\end{con}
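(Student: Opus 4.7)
The plan is to use the geometric reformulation furnished by Theorem \ref{weakelyprincipal}, reduce the problem to finitely presented groups, and then attack the resulting Lefschetz-number comparison using $L^2$-trace techniques.

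First, I would invoke the reduction to finitely presented groups. Any idempotent $p \in M_n(\mathbb{Z}G)$ representing a class of $K_0(\mathbb{Z}G)$ has only finitely many nonzero entries and so lies in $M_n(\mathbb{Z}H)$ for some finitely generated subgroup $H \leq G$; by the naturality of $\epsilon$ under homomorphisms and of $\kappa$ under monomorphisms recorded in Section \ref{Rewiew}, together with Bass's observation (to appear as Proposition \ref{GeoBass}), one can further replace $H$ by a finitely presented group mapping onto it and thereby reduce Conjecture \ref{WeakBass} to the finitely presented case.

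Second, for $G$ finitely presented, realize $G$ as $\pi_1(M)$ of a closed smooth oriented manifold $M$ of sufficiently high dimension, built by thickening a presentation $2$--complex to a handlebody and doubling, as in Section \ref{CW->M}. Given $[P] \in K_0(\mathbb{Z}G)$, construct a pointed homotopy idempotent $f\co M \to M$ inducing the identity on $\pi_1$ whose associated Wall--Reidemeister data realizes $[P]$. By Theorem \ref{weakelyprincipal}, Conjecture \ref{WeakBass} for $G$ is then equivalent to the single geometric assertion
\[
L(f) \;=\; L^{(2)}(\tilde f),
\]
where $\tilde f \co \tilde M \to \tilde M$ is the lift of $f$ to the universal cover.

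The hard part will be this final equality, which is a priori no weaker than the conjecture we set out to prove. Any genuine progress has to exploit extra structure on $G$: a F\o{}lner-exhaustion argument comparing ordinary and $L^2$--traces cell by cell when $G$ is amenable, an algebraic-integer trace computation when $G$ is linear (so that f.g. projective $\mathbb{Z}G$--modules sit inside matrix algebras over rings of integers), or integrality of $L^2$--Betti numbers coming from the strong Atiyah conjecture. Each of these covers a broad class but none handles all discrete groups, which is why Conjecture \ref{WeakBass} remains open in general; my proposal would recover all of the currently known instances within a single geometric framework, but I expect the general statement to require a genuinely new input beyond what purely Lefschetz-theoretic and $L^2$--dimensional methods can provide.
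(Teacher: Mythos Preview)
The statement you are trying to prove is \fullref{WeakBass}, which the paper presents as an open \emph{conjecture}; there is no proof of it in the paper to compare against. The paper's contribution is the equivalence \fullref{weakelyprincipal}, not a resolution of the conjecture.

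Your proposal is not a proof, and you essentially concede this yourself. The argument is circular: you invoke \fullref{weakelyprincipal} to translate \fullref{WeakBass} into the Lefschetz-number equality $L(f)=L^{(2)}(\tilde f)$, and then observe that this equality ``is a priori no weaker than the conjecture we set out to prove.'' That is exactly right---\fullref{weakelyprincipal} is a biconditional, so appealing to its statement (b) in order to establish (a) simply restates the problem. The reduction to finitely presented groups (\fullref{GeoBass}) and the passage to manifolds are both correct and are carried out in the paper, but they are reformulations, not progress toward a proof. Your closing paragraph, listing amenability, linearity, and the strong Atiyah conjecture as partial inputs, is an accurate survey of known special cases rather than a strategy for the general statement. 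In short: what you have written is a fair summary of the paper's equivalence and the state of the art, but it is not a proof of \fullref{WeakBass}, and none is expected, since the conjecture remains open.
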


To clarify the discussion below, it is helpful to consider also
reduced $K$--groups. The inclusion $\left\langle e\right\rangle
\hookrightarrow G$ induces a natural homomorphism
\[
\mathbb{Z}=K_{0}(\mathbb{Z})=K_{0}(\mathbb{Z}\left\langle e\right\rangle
)\longrightarrow K_{0}(\mathbb{Z}G)
\]
whose cokernel is the \emph{reduced }$K$\emph{--group}
$\wtilde{K}_{0}(\mathbb{Z}G)$, equipped with natural epimorphism
$\eta\co K_{0}(\mathbb{Z}G)\twoheadrightarrow\wtilde{K}_{0}(\mathbb{Z}G)$.

\section{Homotopy idempotent selfmaps}
\label{hi}

Let $X$ be a connected CW--complex. A selfmap $f\co X\rightarrow X$ is
called \emph{homotopy idempotent} if $f^{2}=f\circ f$ is
\textsl{freely} homotopic to $f$. Since $X$ is path-connected we
can always assume that $f$ fixes a basepoint $x_{0}\in X$, so that
$f$ induces a (not necessarily idempotent) map
$f_{\sharp}\co \pi_{1}(X)\rightarrow\pi_{1}(X)$. Given a homotopy
idempotent selfmap $f\co X\rightarrow X$ on a \textsl{finite
dimensional CW--complex $X$}, according to Hastings and Heller
\cite{HH} there is a CW--complex $Y$ and maps $u\co X\rightarrow
Y$ and $v\co Y\rightarrow X$ such that the following diagram is
(freely) homotopy commutative:
\begin{equation}
\xymatrix{ X\ar[rr]^f\ar[dr]_{u}\ar@(ur,ul)[rrrr]^f& &X\ar[rr]^f\ar[dr]_{u}& &X\\ &Y\ar
[rr]^{\mathrm{id}}\ar[ur]_{v} & &Y\ar[ur]_{v}.& }\label{Heller}%
\end{equation}
In fact, in this diagram we can arrange that the outside triangles strictly
commute. By replacing the maps by homotopic ones, we can (and do) choose the
maps to preserve basepoints. We then get the following commutative diagram of
groups:%
\[
\xymatrix{
\pi_1(X)\ar[rr]^{f_\sharp}\ar[dr]_{u_{\sharp}}
& &\pi_1(X)\ar[rr]^{f_{\sharp}%
}\ar[dr]_{u_{\sharp}}&  &\pi_1(X)\\
&\pi_1(Y)\ar[rr]\ar[ur]_{v_{\sharp}} & &\pi_1(Y)\ar[ur]_{v_{\sharp}}.&
}%
\]
Here the bottom arrow consists of conjugation by the class of a loop at the
basepoint of $Y$. Looking at the middle triangle, we see that $v_{\sharp}$ is
an injective homomorphism while $u_{\sharp}$ is surjective; hence we can make
the identification
\[
\pi_{1}(Y)\cong v_{\sharp}(\pi_{1}(Y))=\mathrm{Im}(f_{\sharp})\leq\pi_{1}(X).
\]
If the homotopy idempotent $f$ is a pointed homotopy idempotent (meaning that
$f^{2}$ is pointed homotopic to $f$), then $u\circ v\co Y\rightarrow Y$
will induce the identity on $\pi_{1}(Y)$. If we require that $f_{\sharp
}=\mathrm{id}$, we then get that $\pi_{1}(Y)$ is isomorphic to $\pi_{1}(X)$
via $v_{\sharp}=u_{\sharp}^{-1}$.

We now explain how, starting from a homotopy idempotent
$f\co X\rightarrow X$ of a finite connected complex $X$ with
fundamental group $G=\pi_{1}(X)$, we obtain an element $w(f)\in
K_{0}(\mathbb{Z}G)$. In the situation above, $Y$ is called
\emph{finitely dominated}; then the singular chain complex of the
universal cover $\tilde{Y}$ of $Y$ is chain homotopy equivalent to
a complex of type FP over $\mathbb{Z}\pi_{1}(Y)$
\[
0\rightarrow P_{n}\rightarrow\cdots\rightarrow P_{1}\rightarrow P_{0}%
\rightarrow\mathbb{Z}%
\]
with each $P_{i}$ a finitely generated projective $\mathbb{Z}\pi_{1}%
(Y)$--module. We then look at the \emph{Wall element}
\[
w(Y)=\sum_{i=0}^{n}(-1)^{i}[P_{i}]\in K_{0}(\mathbb{Z}\pi_{1}(Y))
\]
(where we follow the notation of Mislin \cite{Mislin-Handbook}). Its image
$$\tilde{w}(Y)=\eta(w(Y))\in\wtilde{K}_{0}(\mathbb{Z}\pi_{1}(Y))$$
is known as
Wall's \emph{finiteness obstruction}, and $\tilde{w}(Y)=0$ exactly when $Y$ is
homotopy equivalent to a finite complex. Finally, we define
\[
w(f)=v_{\sharp}(w(Y))\in K_{0}(\mathbb{Z}G),
\]
whose reduction $\tilde{w}(f)\in\tilde{K}_{0}(\mathbb{Z}G)$ was first
considered by Geoghegan \cite{Geo:LNM}. As he notes, the element $\tilde
{w}(f)$ ``can be interpreted as the obstruction to splitting $f$ through a
finite complex''. Before proceeding, we check that $w(f)$ is well-defined.
First, we observe a form of naturality of Wall elements.

\begin{Lemma}
\label{WallElement} Let $X$ be a finite $n$--dimensional complex,
and suppose that there are maps (of spaces having the homotopy
type of a connected
CW--complex)%
\[
X\overset{u}{\longrightarrow}W\overset{a}{\longrightarrow}V\overset
{v}{\longrightarrow}X
\]
such that $a\co W\rightarrow V$ and $u\circ v\co V\rightarrow W$ are homotopy
inverse. Then the Wall elements $w(W)\in K_{0}(\mathbb{Z}\pi_{1}(W))$ and
$w(V)\in K_{0}(\mathbb{Z}\pi_{1}(V))$ are related by%
\[
w(V)=a_{{\sharp}}w(W).
\]
\end{Lemma}

\begin{proof}
We use the fact that, because conjugation in $G$
induces the identity map on $K_{0}(\mathbb{Z}G)$, homotopic maps induce the
same homomorphism of $K$--groups. Recall from Wall \cite{Wall} that $w(W)$ is
defined (uniquely) by means of any $n$--connected map $\psi\co L\rightarrow W$
where $L$ is a finite $n$--dimensional complex:%
\[
w(W)=(-1)^{n}[\pi_{n+1}(M_{\psi},L)]
\]
where $M_{\psi}$ denotes the mapping cylinder of $\psi$, and the relative
homotopy group is considered as a $\pi_{1}(W)$--module (finitely generated and
projective because of the assumption that $W$ is dominated by $X$). Therefore,
to define $w(V)$, we may take
\[
w(V)=(-1)^{n}[\pi_{n+1}(M_{a\psi},L)]\text{.}%
\]
The result then follows from the natural isomorphism of the exact homotopy
sequences (of $\pi_{1}$--modules) of the pairs $(M_{\psi},L)$ and $(M_{a\psi
},L)$ induced by $a$.
\end{proof}

We now can see that the obstruction to splitting a homotopy
idempotent through a finite complex is well defined.
\begin{Lemma}
Let $X$ be a finite complex with fundamental group $G$, and for
$i=1,2$ let
$X\overset{u_{i}}{\longrightarrow}Y_{i}\overset{v_{i}}{\longrightarrow}X$
be a (homotopy) splitting of a homotopy idempotent map
$f\co X\rightarrow X$. Then in
$K_{0}(\mathbb{Z}G)$%
\[
v_{1{\sharp}}(w(Y_{1}))=v_{2{\sharp}}(w(Y_{2}))\text{.}%
\]
\end{Lemma}

\begin{proof}
From the homotopy commutative diagram
\[
\xymatrix{
& Y_1\ar[rr]^{\rm id}\ar[dr]_{v_1}& & Y_1\ar[dr]_{v_1}\ar[rr]^{\rm
id}& & Y_1\\
X\ar[rr]_f\ar[dr]_{u_2}\ar[ur]_{u_1}& & X\ar[rr]_f\ar[dr]_{u_2}\ar
[ur]_{u_1}& & X\ar[dr]_{u_2}\ar[ur]_{u_1} & \\
& Y_2\ar[rr]_{\rm id}\ar[ur]_{v_2}& & Y_2 \ar[rr]_{\rm
id}\ar[ur]_{v_2}& &Y_2
}%
\]
we deduce from a simple diagram chase that $a:=u_{2}\circ v_{1}\co Y_{1}%
\rightarrow Y_{2}$ and $b:=u_{1}\circ v_{2}\co Y_{2}\rightarrow Y_{1}$ are
mutually inverse homotopy equivalences. Therefore
\[
v_{2}\circ a\circ u_{1}\co X\rightarrow Y_{1}\rightarrow Y_{2}\rightarrow X
\]
is such that $a$ is homotopy inverse to $u_{1}\circ v_{2}\co Y_{2}\rightarrow
Y_{1}$; and thus, by \fullref{WallElement}, $a_{\sharp}(w(Y_{1}))=w(Y_{2})$.
It then follows that
\[
v_{1\sharp}(w(Y_{1}))=v_{1\sharp}\circ a_{\sharp}^{-1}(w(Y_{2}))=v_{1\sharp
}\circ u_{1\sharp}\circ v_{2\sharp}(w(Y_{2}))=f_{\sharp}\circ v_{2\sharp
}(w(Y_{2})).
\]
Similarly
\[
v_{2\sharp}(w(Y_{2}))=f_{\sharp}\circ v_{1\sharp}(w(Y_{1})).
\]
Then substituting in the previous formula, and using idempotency
of $f_{\sharp}$, gives the result.
\end{proof}

The key fact for giving a topological meaning to the Bass conjecture
is the following, which can be extracted from a result of Wall
\cite[Theorem~F]{Wall} in the light of the above. It is also shown
explicitly by Mislin \cite{OG}.

\begin{Theorem}
\label{all elts obstructions}Let $G$ be a finitely presented
group, let $\tilde{\alpha}\in\wtilde{K}_{0}(\mathbb{Z}G)$, and
let $n\geq2$. Then there is a finite $n$--dimensional complex
$X^{n}$ with fundamental group $G$ and a pointed homotopy
idempotent selfmap $f$ of $X^{n}$ inducing the
identity on $\pi_{1}$, such that $\tilde{w}(f)$ is equal to $\tilde{\alpha}%
$.
\end{Theorem}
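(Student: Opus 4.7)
The plan is to realize $\tilde{\alpha}$ first as a Wall finiteness obstruction, and then to promote this into the obstruction $\tilde{w}(f)$ of a homotopy idempotent. The first step uses finite presentability of $G$ to invoke Wall's realization theorem (\cite[Theorem~F]{Wall}, with the two--dimensional case refined by Mislin \cite{OG}): there exists a finitely dominated CW--complex $Y$ with $\pi_{1}(Y)\cong G$ and $\tilde{w}(Y)=\tilde{\alpha}$, together with a finite $n$--dimensional complex $X_{0}$ and pointed cellular maps $u\co X_{0}\to Y$, $v\co Y\to X_{0}$ with $u\circ v$ pointed homotopic to $\mathrm{id}_{Y}$.

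The second step extracts a homotopy idempotent. Set $f_{0}:=v\circ u\co X_{0}\to X_{0}$; then $f_{0}\circ f_{0}=v\circ(u\circ v)\circ u$ is pointed homotopic to $v\circ u=f_{0}$, so $f_{0}$ is a pointed homotopy idempotent. The relation $u_{\sharp}\circ v_{\sharp}=\mathrm{id}_{G}$ makes $u_{\sharp}\co\pi_{1}(X_{0})\twoheadrightarrow G$ a split surjection, in general with nontrivial kernel. To enforce $\pi_{1}(X)\cong G$ and $f_{\sharp}=\mathrm{id}$, I would enlarge $X_{0}$ to $X$ by attaching finitely many $2$--cells along loops normally generating $\ker(u_{\sharp})$ (possible because $G$ is finitely presented and $X_{0}$ is finite), extending $u$ across each new cell by a nullhomotopy of its image in $Y$ (which exists since the attaching loop lies in $\ker(u_{\sharp})$), and leaving $v$ unchanged so that $v(Y)\subseteq X_{0}\subseteq X$. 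The enlarged $X$ is still finite and $n$--dimensional (using $n\geq 2$), $u_{\sharp}$ is now an isomorphism with $v_{\sharp}=u_{\sharp}^{-1}$, and $f:=v\circ u\co X\to X$ is a pointed homotopy idempotent inducing the identity on $\pi_{1}(X)\cong G$.

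Finally, \fullref{WallElement} yields $w(f)=v_{\sharp}(w(Y))\in K_{0}(\mathbb{Z}G)$; since $v_{\sharp}$ is the identity on $\pi_{1}$ under our identification, it acts as the identity on $K_{0}(\mathbb{Z}G)$, so $\tilde{w}(f)=\tilde{w}(Y)=\tilde{\alpha}$. The principal obstacle, and essentially the only nontrivial input, is the realizability assertion in the first step: producing a finitely dominated complex of the correct dimension that realizes $\tilde{\alpha}$ and has prescribed fundamental group $G$. For $n\geq 3$ this follows cleanly from Wall's Theorem~F, while the case $n=2$ requires Mislin's explicit two--dimensional construction. The cellular and $\pi_{1}$--bookkeeping of the remaining steps is routine once this key input is available.
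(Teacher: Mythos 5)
The paper gives no proof of this theorem: it simply cites Wall \cite[Theorem~F]{Wall} and Mislin \cite{OG} and says the result ``can be extracted'' from them ``in the light of the above.'' Your proposal is a correct reconstruction of exactly that extraction, not a genuinely different route. You correctly isolate the Wall/Mislin realization of $\tilde\alpha$ as the finiteness obstruction of a finitely dominated $Y$ (with $\pi_1(Y)\cong G$) dominated by a finite $n$--complex as the single nontrivial input, matching the paper's own attribution -- Wall's Theorem~F for $n\geq 3$, Mislin's explicit construction for $n=2$ -- and the remaining steps are routine and correct: $f_0=v\circ u$ is a pointed homotopy idempotent; $\ker(u_\sharp)$ is normally finitely generated because $\pi_1(X_0)$ is finitely generated and $G=\pi_1(Y)$ is finitely presented, so finitely many $2$--cells (harmless since $n\geq 2$) suffice to make $u_\sharp$ an isomorphism with $v_\sharp=u_\sharp^{-1}$ while preserving the domination $u\circ v\simeq_{\mathrm{pt}}\mathrm{id}_Y$ (as $v$ still lands in $X_0$); and $\tilde w(f)=v_\sharp\tilde w(Y)=\tilde\alpha$ then falls out of the definition of $w(f)$ in \fullref{hi}.
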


\begin{rem}
\label{unreduced Wall}For $n\geq3$, the unreduced version of this result also
holds. For, given $\alpha\in K_{0}(\mathbb{Z}G)$, then choose a map $f$ as in
the theorem with respect to $\tilde{\alpha}=\eta(\alpha)$. It follows that for
some nonnegative $r,s$ we have $w(f)=\alpha+[\mathbb{Z}G]^{r}-[\mathbb{Z}%
G]^{s}$. Replacing $f$ by $f\vee\mathrm{id}_{W}$ where $W=\bigl(  \bigvee
_{r}S^{3}\bigr)  \vee\bigl(  \bigvee_{s}S^{2}\bigr)$ then gives the
desired selfmap. When $n=2$, the method fails, as without the possibility of
adjoining a simply-connected space of non-positive Euler characteristic we can
only increase the rank of the Wall element. (Recall from
Mislin \cite[Lemma~5.1]{Mislin-Handbook} that for any finitely dominated space
$Y$, the rank of $w(Y)$ equals $\chi(Y)$.)
\end{rem}

\section{Invariants for selfmaps of complexes\label{invar CW}}

We recall from, for example, the articles \cite{Geo:LNM,Geo:Hbk} by
Geoghegan, the
definition of the \emph{Nielsen number} $N(f)$ of a selfmap
$f\co X\rightarrow X$ of a finite connected complex (assumed, as
discussed above, to fix a basepoint of $X$). Let $f_{{\sharp}}$ be
the endomorphism of $G=\pi_{1}(X,x)$ induced by $f$.
Define elements $\alpha$ and $\beta$ of $G$ to be $f_{{\sharp}}$%
\emph{--conjugate} if for some $z\in G$%
\[
\alpha=z\cdot\beta\cdot(f_{{\sharp}}z)^{-1}\,\text{,}%
\]
and let $G_{f_{{\sharp}}}$ denote the set of $f_{{\sharp}}$--conjugacy classes,
making $\mathbb{Z}G_{f_{{\sharp}}}$ a quotient of $\mathbb{Z}G$. Now $N(f)$ is
defined to be the number of nonzero coefficients in the formula for the
\emph{Reidemeister trace} of $f$ at $x\in X$:%
\[
R(f,x)=\sum\nolimits_{C\in G_{f_{{\sharp}}}}n_{C}\cdot C\in\mathbb{Z}%
G_{f_{{\sharp}}}\text{.}%
\]
The coefficient $n_{C}$ can be described geometrically as the
fixed-point index of a fixed-point class of $f$, and homologically
in terms of traces of the homomorphisms induced by $f$ on the
chain complex of the universal cover of $X$. In the literature,
$R(f,x)$ is also known as the \emph{generalized Lefschetz number}.

When, as prompted by \fullref{all elts obstructions} above, we
take $f$ to induce the identity on $\pi_{1}$, then
$R(f,x)\in\bigoplus_{\lbrack s]\in\lbrack
G]}\mathbb{Z}\cdot\lbrack s]$ and the following holds
(cf Geoghegan \cite[p505]{Geo:Hbk}).

\begin{Lemma}[Geoghegan]
\label{GeogheganLemma} In the setting of
diagram \eqref{Heller} of \fullref{hi} where $X$ is a finite
connected complex, and $f$ is a pointed homotopy idempotent
selfmap inducing the identity map on the
fundamental group,%
\[
{\mathrm{HS}}({w}(f))=R(f,x).
\]
\end{Lemma}

For the computation of Nielsen numbers, the following result also proved
in Jiang \cite[p20]{Jiang-Lectures}, attributed to Fadell, is useful.

\begin{Lemma}
\label{Nielsens agree}Suppose that the diagram of finite connected
complexes and based maps
\[
\xymatrix{
\wwbar{T}\ar[rr]^{\bar{g}}\ar[d]^r & &\wwbar{T}\ar[drr]^r & & \\
T \ar[rr]^g & &T\ar[u]^s\ar[rr]^{\rm id} & &T
}
\]
 is commutative up to (free) homotopy. Then $N(g)=N(\bar{g})$.
\end{Lemma}

\begin{proof}
We use the definition and notation for $N(g)$
and $N(\bar{g})$ given above; we also put $\wwbar{G}=\pi_{1}(\wwbar{T}%
,\,s(t_{0}))$ where $t_{0}$ is the basepoint of $T$. Evidently, because
$s\circ g\simeq\bar{g}\circ s$, there is a well-defined function%
\[
s_{{\sharp}}\co G_{g_{{\sharp}}}\longrightarrow\wwbar{G}_{\bar{g}_{{\sharp}}}
\]
which, because also $r\circ\bar{g}\simeq g\circ r$, while $r\circ
s\simeq\mathrm{id}_{T}$, has left inverse $r_{{\sharp}}\co \wwbar{G}_{\bar
{g}_{{\sharp}}}\rightarrow G_{g_{{\sharp}}}$. In particular, $s_{{\sharp}}$ is
injective. With reference to the sentence before \cite[(2.2)]{Geo:LNM}, note
that this is not true in general without some condition on $s$, such as its
having a left inverse.

The formula for the Reidemeister trace of $g$ at $t_{0}\in T$ is:%
\[
R(g,t_{0})=\sum\nolimits_{C\in G_{g_{{\sharp}}}}n_{C}\cdot C\in\mathbb{Z}%
G_{g_{{\sharp}}}\text{.}%
\]
According to \cite[(2.2)]{Geo:LNM}, we also have%
\[
R(\bar{g},\,s(t_{0}))=\sum\nolimits_{C\in G_{g_{{\sharp}}}}n_{C}\cdot
s_{{\sharp}}(C)\in\mathbb{Z}\wwbar{G}_{\bar{g}_{{\sharp}}}\text{.}%
\]
Because $s_{{\sharp}}$ is injective, the two sums have the same number of
nonzero coefficients; that is, the Nielsen numbers agree.
\end{proof}

The next lemma permits us in our discussion of Nielsen numbers to restrict to
the case of those homotopy idempotents that are pointed homotopy idempotents
and induce the identity on the fundamental group.

\begin{Lemma}
{\label{reduction}} Suppose that $f\co X\rightarrow X$ is a homotopy
idempotent on a finite connected complex $X$, fixing $x_{0}\in X$,
with $G=\pi _{1}(X,x_{0})$ and $H:=f_{\sharp}(G)$. Then there is a
finite connected complex $K$ with fundamental group isomorphic to
$H$ and a pointed homotopy idempotent $g\co K\rightarrow K$, inducing
the identity map on $H$, such that $N(f)=N(g)$. Furthermore, if
$G$ satisfies the Bass conjecture, then so does $H$.
\end{Lemma}

\begin{proof}
Let
$X\overset{u}{\longrightarrow}Y\overset {v}{\longrightarrow}X$ be
a splitting for $f$. Then, by \cite[Corollary~5.5]{Mislin-Handbook},
$Y\times S^{3}$ is homotopy equivalent to a finite connected
complex $K$, because $Y$ is finitely dominated and the Euler
characteristic of $S^{3}$ is zero. Let $h\co Y\times S^{3}\rightarrow
K$ be a pointed homotopy equivalence, with pointed homotopy
inverse $k$; define $g\co K\rightarrow K$ to be the map
$h\circ(\mathrm{id}_{Y}\times\{\ast\})\circ k$, where
$\mathrm{id}_{Y}\times\{\ast\}\co Y\times S^{3}\rightarrow Y\times
S^{3}$ denotes the idempotent on $Y\times S^{3}$ given by the
projection onto $Y$. Clearly $g$ is a pointed homotopy idempotent,
inducing the identity on the fundamental group of $K$, and
$\pi_{1}(K)\cong\pi_{1}(Y)\cong H$.

Writing $u^{\prime}=u\times\mathrm{id}_{S^{3}}$ and
$v^{\prime}=v\times \mathrm{id}_{S^{3}}$, we now apply \fullref{Nielsens
agree} with $\wwbar{T}=X\times S^{3}$ and $T=K$,
and maps $\wbar{g}=f\times \{\ast\}$, $r=h\circ u^{\prime}$,
$s=v^{\prime}\circ k$ and $g$ as defined already. This yields
the following homotopy commutative diagram
\[%
\xymatrix{
X\times S^{3}\ar[rr]^{f\times\{\ast\}}\ar[d]^{u^{\prime}}\ar@(l,ul)[dd]_r & & X\times S^{3}\ar[dr]^{u'}\ar@(ur,u)[ddrr]^r & & \\
Y\times S^{3}\ar[d]^h &  & Y\times S^{3}\ar[u]^{v'} & Y\times S^{3}\ar[dr]^h &\\
K\ar[rr]^g &  & K\ar[rr]^{\mathrm{id}}\ar[u]^k\ar@(ul,l)[uu]^s & & K.
}
\]
We conclude that $N(g)=N(f\times\{\ast\})$.

On the other hand, $N(f\times\{\ast\})=N(f)$, as can be seen by again applying
\fullref{Nielsens agree}, with the top part of the diagram as before, but
$T=X$, $r=\mathrm{pr}_{X}$ and $s$ the inclusion $x\mapsto(x,\ast)$
\[%
\xymatrix{
X\times S^{3}\ar[rr]^{f\times\{\ast\}}\ar[d] &  & X\times S^{3}\ar[drr] & & \\
X\ar[rr]^f &  & X \ar[rr]^{\mathrm{id}}\ar[u]& & X.
}
\]
Therefore $N(f)=N(g)$. That $H\cong\pi_{1}(Y)$ satisfies the Bass conjecture
if $G$ does follows by observing that $v_{\sharp}\co \pi_{1}(Y)\rightarrow\pi
_{1}(X)$ is a split injection, and therefore the induced map $HH_{0}%
(\mathbb{Z}\pi_{1}(Y))\rightarrow HH_{0}(\mathbb{Z}\pi_{1}(X))$ is a split
injection too.
\end{proof}

We are now able to obtain a restatement of the theorem of
Geoghegan referred to in the Introduction (\cite[Theorem~4.1']{Geo:LNM}
(i)' $\Leftrightarrow$ (iii)'), in a form suitable to the
present treatment.

\begin{Theorem}[Geoghegan]
\label{Geoghegan Theorem 4.1'}
Let $G$ be a finitely presented group. The following are equivalent.
\item
\begin{itemize}
\item[\rm(a)]$G$ satisfies Bass' \fullref{Bass}.
\item[\rm(b)] Every homotopy idempotent selfmap $f$ on a finite
connected complex with fundamental group $G$ has Nielsen number
either zero or one.
\end{itemize}
\end{Theorem}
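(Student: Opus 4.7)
The plan is to combine the realization theorem \fullref{all elts obstructions} (in its unreduced enhancement \fullref{unreduced Wall}) with Geoghegan's identity ${\mathrm{HS}}(w(f))=R(f,x)$ from \fullref{GeogheganLemma} and the reduction device of \fullref{reduction}. Each direction of the equivalence then becomes a short chase through these three tools.

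For $\mathrm{(a)}\Rightarrow\mathrm{(b)}$, I would start with an arbitrary homotopy idempotent $f\co X\to X$ on a finite connected complex with $\pi_1(X)=G$ and invoke \fullref{reduction} to replace $f$ by a pointed homotopy idempotent $g\co K\to K$ inducing the identity on $\pi_1(K)\cong H=f_\sharp(G)$, with $N(g)=N(f)$ and with $H$ still satisfying Bass. Then \fullref{GeogheganLemma} delivers $R(g,\ast)={\mathrm{HS}}(w(g))\in\mathbb{Z}\cdot[e]$, so $R(g,\ast)$ has at most one nonzero coefficient and $N(f)=N(g)\in\{0,1\}$.

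For $\mathrm{(b)}\Rightarrow\mathrm{(a)}$, I would fix $\alpha\in K_0(\mathbb{Z}G)$, use \fullref{unreduced Wall} (say with $n=3$) to realize $\alpha$ as $w(f)$ for some pointed homotopy idempotent $f\co X\to X$ with $\pi_1(X)=G$ and $f_\sharp=\mathrm{id}$, and apply \fullref{GeogheganLemma} to obtain ${\mathrm{HS}}(\alpha)=R(f,\ast)$; hypothesis $\mathrm{(b)}$ then says this element is supported on at most one conjugacy class of $G$. The genuine obstacle is the last step, namely showing that this supporting class must be $[e]$: knowing only that $N(f)\le 1$ does not by itself single out $[e]$. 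I would resolve it by contradiction. If ${\mathrm{HS}}(\alpha)=n[s_0]$ with $[s_0]\neq[e]$ and $n\neq 0$, then applying \fullref{unreduced Wall} again to $\alpha+[\mathbb{Z}G]\in K_0(\mathbb{Z}G)$ produces a pointed homotopy idempotent $f'$ with $R(f',\ast)={\mathrm{HS}}(\alpha+[\mathbb{Z}G])=n[s_0]+[e]$, forcing $N(f')\ge 2$ and contradicting $\mathrm{(b)}$. It is precisely for this $+[\mathbb{Z}G]$ trick that the unreduced form of the realization is indispensable here, rather than only the reduced statement of \fullref{all elts obstructions}.
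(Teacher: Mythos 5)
Your proposal is correct and follows essentially the same route as the paper: reduce via \fullref{reduction} to a pointed homotopy idempotent inducing the identity on $\pi_1$, use \fullref{GeogheganLemma} to translate between $\mathrm{HS}(w(f))$ and $R(f,\ast)$, and dispose of the possibility that the single nonzero coefficient of $R(f,\ast)$ sits on a non-identity class $[s_0]$ by adding $[\mathbb{Z}G]$ and producing a map with Nielsen number $2$. The only cosmetic difference is that the paper builds the contradicting map explicitly as $f'=f\vee\mathrm{id}_{S^2}$ (so $w(f')=w(f)+[\mathbb{Z}G]$), whereas you re-invoke \fullref{unreduced Wall} on $\alpha+[\mathbb{Z}G]$; the two are interchangeable, and your remark that the unreduced realization is the point of \fullref{unreduced Wall} is exactly right.
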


\begin{proof}
We start with the implication $\mathrm{(a)} \Rightarrow \mathrm{(b)}$. Let $f$ be as in (b). Then by \fullref{reduction} we can assume
that $f\co X\rightarrow X$ is actually a pointed homotopy idempotent
on a finite connected complex $X$ that induces the identity map on
$\pi_{1}(X,x_{0})\cong G$. Because $G$
satisfies the Bass conjecture, we have $\mathrm{HS}(w(f))\in$ $\mathbb{Z}%
\cdot\lbrack e]$. Then, by \fullref{GeogheganLemma}, $R(f,x_{0})$ has at
most one nonzero coefficient, and $N(f)\leq1$.

In the other direction, we of course use \fullref{all elts obstructions}
and \fullref{unreduced Wall}. Then, given ${\alpha}\in{K}_{0}(\mathbb{Z}%
G)$, there is a finite $n$--dimensional complex $X$ ($n\geq3$) with
fundamental group $G$ and a pointed homotopy idempotent selfmap
$f$ of $X$ inducing the identity on $\pi_{1}(X)=G$, such that ${w}(f)$
is equal to ${\alpha}$. So, from \fullref{GeogheganLemma} we
deduce that ${\mathrm{HS}}({\alpha })=R(f,x)$. This last term
vanishes when $N(f)=0$; if $N(f)=1$, $R(f,x)$ is a nonzero
multiple of some class $[s]$, and we are done in case $[s]=[e]$.
So, the remaining case is where $R(f,x)$ is a nonzero multiple
(necessarily $\chi(Y)$) of some class $[s]\neq\lbrack e]$. In that
event we may turn instead to $f^{\prime}=f\vee\mathrm{id}_{S^{2}}$
with corresponding $Y^{\prime}=Y\vee S^{2}$ having $w(f^{\prime})$
$=w(f)+[\mathbb{Z}G]$. However, this implies the contradiction
that $N(f^{\prime})=2$, and can therefore be
eliminated.
\end{proof}

\begin{rem}
The actual wording of \cite[Theorem~4.1']{Geo:LNM} is in terms of
the Bass conjecture for a particular element $\alpha$ of
$K_{0}(\mathbb{Z}G)$, rather than for $G$ itself.
\end{rem}

\section{Selfmaps of manifolds\label{CW->M}}

From Wecken's work \cite{W}, one knows that $N(f)$ serves as a lower bound for
the number of fixed points of any map homotopic to $f$. Thus, the implication
(ii) $\Rightarrow$ (i) in the next result is immediate.

\begin{Lemma}
\label{Nielsen to unique fp}Suppose that $f\co M\rightarrow M$ is a selfmap of a
closed manifold $M$ of dimension at least $3$. Then the following are equivalent:

\begin{enumerate}
\item[\rm(i)]the Nielsen number of $f$ is $0$ or $1$;

\item[\rm(ii)] $f$ is homotopic to a map having one arbitrarily chosen unique
fixed point.
\end{enumerate}
\end{Lemma}

\begin{proof}
We need only prove that (i) implies (ii).

First, various results in the literature (see in particular Wecken
\cite{W}, Brown \cite{Brown}, and Shi \cite{Shi} for the PL case; Jiang
\cite{Jiang-LNM886} for the smooth case) show that every selfmap of $M$
with Nielsen number $N$ is homotopic to a map with exactly $N$ fixed
points. By a result of Schirmer \cite[Lemma~2]{Schirmer}, these fixed
points may be chosen arbitrarily.

Second, recall from \cite{Schirmer} that every fixed-point-free selfmap of a
connected compact PL manifold of dimension at least $3$ is homotopic to a
selfmap having an arbitrary unique fixed point. The argument there can be
adapted as follows.

Choose $a\in M$, and consider the closure $\bar{B}$ of an open ball $B$ around
$a$ lying in a chart domain for $M$. Since $f$ is fixed-point-free, we choose
the ball $\bar{B}$ to be small enough so that it is disjoint from its image
under $f$. For convenience, we consider points of $\bar{B}$ with coordinates
so that $a=\mathbf{0}$ and $\bar{B}$ consists of those points $x$ with
$\left\|  x\right\|  \leq1$. Now let $\gamma$ be any path from $a$ to $f(a)$
that continues a unit-speed ray from $a$ to the boundary of $\bar{B}_{1/2}$
(the closed ball of points of $B$ of norm at most $1/2$) and never re-enters
$\bar{B}_{1/2}$. Also, let $\lambda\co M\rightarrow\lbrack0,1]$ be a function
having $\lambda^{-1}(1)=M-B$ and $\lambda^{-1}(0)=\bar{B}_{1/2}$. Then the
desired map $g\co M\rightarrow M$ homotopic to $f$ is given as follows.%
\[
g(x)=\left\{
\begin{array}
[c]{lll}%
\gamma(t_{x}) & \quad & 0\leq2\left\|  x\right\|  <1\\
f((1-\lambda(x))a+\lambda(x)x) &  & 1\leq2\left\|  x\right\|  \leq2\\
f(x) &  & x\in M-\bar{B}\ \text{,}%
\end{array}
\right.
\]
where $t_{x}=1-\exp(-2\left\|  x\right\|  /(1-2\left\|  x\right\|  ))$. Here,
recall the standard inequality
\[
\ln(1-u)>-u/(1-u)
\]
for $0<u<1$. It implies that, whenever $t_{x}\neq0$ and $\gamma(t_{x})\in
\bar{B}_{1/2}\,$, so that $\left\|  \gamma(t_{x})\right\|  =t_{x}\,$,
\[
\left\|  \gamma(t_{x})\right\|  >2\left\|  x\right\|  \,\text{.}%
\]
Hence $a$ is the unique fixed point of $g$.

Note that it is possible to make $g$ smooth. For, since every map is homotopic
to a smooth map and homotopy does not change Nielsen numbers, there is no loss
of generality in assuming $f$ to be smooth. Then, by our taking both $\gamma$
and $\lambda$ to be smooth functions in the above argument, a smooth map $g$
results.
\end{proof}

\begin{rem}
Note that this result cannot be extended to dimension $2$ in
general. Indeed, for every connected, closed surface of negative
Euler characteristic and every natural number $n$, Jiang \cite[Theorem~2]{Jiang:non-Wecken} exhibits a selfmap $f_{n}$ of the surface
having $N(f_{n})=1$, but with every map homotopic to $f_{n}$
having more than $n$ fixed points. For some particular results on
selfmaps on surfaces, see also Kelly \cite{K}.
\end{rem}

We next observe that selfmaps of complexes may be studied by means of selfmaps
of manifolds without changing the Nielsen number.

\begin{Lemma}
\label{mani} \label{complex to manifold}Let $X$ be a finite
connected complex. Then the following hold.

\begin{enumerate}
\item[\rm(a)]There is a closed, oriented and smooth manifold $M$ of dimension at
least $3$ with maps $r\co M\rightarrow X$ and $s\co X\rightarrow M$ having $r\circ
s$ pointed homotopic to $\mathrm{id}_{X}$ and inducing isomorphisms of
fundamental groups.

\item[\rm(b)] For any selfmap $f\co X\rightarrow X$, the selfmap $\bar{f}=s\circ
f\circ r\co M\rightarrow M$ has Nielsen number%
\[
N(\bar{f})=N(f)\text{.}%
\]

\item[\rm(c)] If $f$ is either homotopy idempotent or pointed homotopy
idempotent, then so is $\bar{f}$.
\end{enumerate}
\end{Lemma}

\begin{proof}
(a)\qua Working up to pointed homotopy type, we may assume
that $X$ is a finite simplicial complex of dimension $n\geq2$. By a
result of Wall \cite[Theorem~1.4]{Wall:AnnM1966} we can do surgery on the
constant map $S^{2n}\rightarrow X$ to obtain a smooth, oriented (indeed,
stably parallelizable) closed $2n$--manifold $M$ and an $n$--connected map
(called an $n$--equivalence by Spanier \cite{Spanier}) $r\co M\rightarrow
X$. Because $n\geq2$, the map $r$ is a $\pi_{1}$--isomorphism. Moreover,
since the obstruction groups $H^{i}(Y;\pi_{i}(r))$ all vanish (or by
\cite[(7.6.13)]{Spanier}), the identity map $X\rightarrow X$ factors up
to pointed homotopy through $M\rightarrow X$, and the result follows.

(b)\qua This result is immediate from \fullref{Nielsens agree} above,
on putting $\wwbar{T}=M,\ T=X,\ g=f$ and $\bar{g}=\bar{f}$.

(c)\qua Obviously,%
\begin{align*}
\bar{f}\circ\bar{f}  &  \simeq s\circ f\circ r\circ s\circ f\circ r\\
&  \simeq s\circ f\circ f\circ r\simeq s\circ f\circ r\simeq\bar{f}\text{,}%
\end{align*}
and $\bar{f}$ is a pointed idempotent if $f$ is.
\end{proof}

\begin{exm}
For any connected non-contractible space $X$, the monoid of homotopy classes
of selfmaps of $X$ always contains at least two idempotents, the class of
nullhomotopic maps and the class of maps homotopic to the identity. Each
constant map in the former class contains exactly one fixed point (which by
connectivity is arbitrary), and obviously has Nielsen number $1$.

On the other hand, for $X$ a finite complex the identity map has Nielsen
number equal to $\min\{1$, $\left|  \chi(X)\right|  \}$. When $X$ is also a
smooth manifold, it admits a smooth vector field whose only singularity is an
arbitrarily chosen point $x_{0}\in X$. Its associated flow provides a homotopy
from the identity map to a smooth map with sole fixed point $x_{0}$.
\end{exm}

\section[Proof of \ref{principal} and applications]
  {Proof of \fullref{principal} and applications}
\label{prfThm1}

The discussion above now allows a reformulation of statement (b)
of \fullref{Geoghegan Theorem 4.1'}. \fullref{complex to manifold}
combines with \fullref{Nielsen to unique fp} to yield a manifold version
of statement (b), as follows.

\begin{Proposition}
\label{PrincipalUtile}Let $G$ be a finitely presented group. The following are equivalent.

\begin{itemize}
\item[\rm(a)]$G$ satisfies Bass' \fullref{Bass}.

\item[\rm(b)] Given any closed, smooth and oriented manifold $M$ of dimension at
least $3$ with $G=\pi_{1}(M)$, every homotopy idempotent selfmap $f$ on $M$ is
homotopic to one that has a single fixed point.
\end{itemize}
\end{Proposition}

\begin{rem}
The following facts combine to show that the dimension $3$ in (b)
above is best possible. For $F$ a closed surface of negative Euler
characteristic and $n \ge 2$, Kelly \cite{Kelly-new} constructs a
homotopy idempotent selfmap $f_n\co F\rightarrow F$ such that every
map homotopic to $f_n$ has at least $n$ fixed points. On the other
hand, the fundamental groups of surfaces are well-known to satisfy
Bass' \fullref{Bass} (see Eckmann \cite{EId}, for example).
\end{rem}

The following argument of Bass, reported by R Geoghegan, shows
that it suffices to consider finitely presented groups in
considering Bass' conjectures.

\begin{Lemma}[Bass]
\label{GeoBass}
Conjectures \ref{Bass} and \ref{WeakBass} hold for all groups if they
hold for all finitely presented groups.
\end{Lemma}

\begin{proof}
Fix a group $G$. We show that any idempotent
$\mathbb{Z}G$--matrix $A$ lifts to an idempotent matrix $A_{1}$ over the group
ring of a finitely presented group $G_{1}$. There is a finitely generated
subgroup $G_{0}$ of $G$ such that the entries of $A$ lie in $\mathbb{Z}G_{0}$.
Write $G_{0}$ as $F/R$ where $F$ is a finitely generated free group; and let
$B$ be a lift of $A$ to $\mathbb{Z}F$. Then there is a finite subset $W$ of
$R$ such that the matrix $B^{2}-B$ has all its entries in the ideal of
$\mathbb{Z}F$ generated by $\{1-r\mid r\in W\}$. Now let $R_{1}\leq R$ be the
normal closure of $W$ in $F$. Then we have $G_{1}:=F/R_{1}$ finitely
presented, and the image $A_{1}$ of $B$, with entries in $\mathbb{Z}G_{1}$, is
an idempotent matrix. The map $G_{1}\twoheadrightarrow G_{0}\hookrightarrow G$
takes $A_{1}$ to $A$. Therefore $[A_{1}]\mapsto\lbrack A]$ under the induced
map $K_{0}(\mathbb{Z}G_{1})\rightarrow K_{0}(\mathbb{Z}G)$. Then the result
follows from naturality of $\mathrm{HS}$ .
\end{proof}

After \fullref{PrincipalUtile}, it is now straightforward to deduce
\fullref{principal}.\hfill\qed

\begin{rem}
Our arguments lead to variations on Theorem 1. First, one can sharpen the
implication (b) $\Rightarrow$ (a) by referring in (b) to a smaller class of
manifolds. Because, by \fullref{all elts obstructions} above, for a
finitely presented group $G$ any $\tilde{\alpha}\in\tilde{K}_{0}(\mathbb{Z}G)$
can be realized by a homotopy idempotent selfmap $f$ of a $2$--dimensional
complex with fundamental group $G$ (so $\tilde{w}(f)=\tilde{\alpha}$), the
Bass conjecture is equivalent to the following: \emph{Every homotopy
idempotent selfmap of a closed, stably parallelizable smooth $4$--manifold is
homotopic to one with a single fixed point.}

In the other direction, one can strengthen (a) $\Rightarrow$ (b) by enlarging
the class of spaces to which (b) applies. There is no need to restrict
attention to oriented, smooth manifolds; one can also apply to PL manifolds
and other, possibly bounded, Wecken spaces (see Jiang
\cite{Jiang-AmJM1980}).
\end{rem}

As an application of \fullref{PrincipalUtile} we obtain the following.

\begin{Corollary}
Any homotopy idempotent selfmap on a closed, smooth and oriented
$3$--dimensional manifold $M$ is homotopic to one with a single fixed point.
\end{Corollary}

\begin{proof}
It is enough to show that the fundamental group
$G$ of a closed smooth oriented $3$--dimensional manifold $M$ satisfies Bass'
conjecture; the Corollary then follows from \fullref{PrincipalUtile}.
By Kneser's result (see Milnor \cite{Milnor}), $M$ is a connected sum of prime
manifolds $M_{i}$, where each $M_{i}$ belongs to one of the following classes:

\begin{enumerate}
\item $M_{i}$ with finite fundamental group;

\item $M_{i}$ with fundamental group $\mathbb{Z}$;

\item $M_{i}$ a $K(\pi,1)$ manifold (so $\pi$ is a Poincar\'{e} duality group).
\end{enumerate}

Note that the fundamental group of $M$ is the free product of the fundamental
groups of the various $M_{i}$. By Gersten's result \cite{G}, given two groups
$\Gamma$ and $H$, the reduced projective class group of the free product
$\Gamma\ast H$ reads
\[
\tilde{K}_{0}(\mathbb{Z}(\Gamma\ast H))\cong\tilde{K}_{0}(\mathbb{Z}%
\Gamma)\oplus\tilde{K}_{0}(\mathbb{Z}H)\;.
\]
Thus, every element in $\tilde{K}_{0}(\mathbb{Z}(\Gamma\ast H))$ is an
integral linear combination of projectives induced up from $\Gamma$ and $H$
respectively. It follows that if Bass' conjecture holds for both $\Gamma$ and
$H$, then it holds for $\Gamma\ast H$ as well.

In the list above, clearly finite groups and $\mathbb{Z}$ satisfy Bass'
conjecture. That 3--di\-men\-sion\-al Poincar\'e duality groups satisfy Bass' conjecture follows from Eckmann's
work (see Eckmann \cite[p247]{EId}) on groups of rational cohomological dimension
$2$.
\end{proof}

Since the Bass conjecture is known for instance for the fundamental groups of
manifolds in the class below \cite{EId}, we have another consequence.

\begin{Corollary}
Any homotopy idempotent selfmap of a non-positively curved, oriented closed
manifold of dimension at least $3$ is homotopic to a map with a single fixed
point.
\end{Corollary}

It would be interesting to see geometric proofs of these facts.

\section{Lefschetz numbers}

\label{Lef}

Let $X$ be a CW--complex and $f\co X\rightarrow X$ a continuous selfmap. Then $f$
induces for each $n\in\mathbb{N}$ a map
\[
f_{n}\co H_{n}(X;\,\mathbb{Q})\rightarrow H_{n}(X;\,\mathbb{Q})
\]
of $\mathbb{Q}$--vector spaces. If the sum of the dimensions of the vector
spaces $H_{n}(X;\,\mathbb{Q})$ is finite, the \emph{Lefschetz number }of\emph{
}$f$ is defined as
\[
L(f)=\sum_{n\geq0}(-1)^{n}{\mathrm{{Tr}}}(f_{n}).
\]
In cases where the CW--complex $X$ is finite or finitely dominated, the
Lefschetz number of a selfmap is obviously always defined, and for
$f=\mathrm{id}_{X}$, $L(f)=\chi(X)$ the Euler characteristic of $X$. One
extends this definition to the case of $G$--CW--complexes as follows.

Let $\mathcal{N}\!G$ denote the von Neumann algebra of the discrete group $G$
(\textsl{i.e.} the double commutant of $\mathbb{C}G$ considered as a
subalgebra of the algebra of bounded operators on the Hilbert space $\ell
^{2}G$ -- see for example L\"uck \cite{Lueck}). With $e$ as the neutral element of
$G$, write $e\in G\subset\ell^{2}G$ for the delta-function
\[
{e}\co G\rightarrow\mathbb{C},\quad g\mapsto\left\{
\begin{array}
[c]{cc}%
1 & \hbox{ if }g=e\\
0 & \hbox{ otherwise.}%
\end{array}
\right.
\]
The standard trace
\[
\mathrm{{tr}}_{G}\co \mathcal{N}\!G\rightarrow\mathbb{C},\quad
x\mapsto\langle xe,e\rangle_{\ell^{2}G}\in\mathbb{C}%
\]
extends to a trace $\mathrm{{tr}}_{G}(\phi)\in\mathbb{C}$ for $\phi
\co M\rightarrow M$ a map of finitely presented $\mathcal{N}\!G$--modules as
follows. Recall that a finitely presented $\mathcal{N}\!G$--module $M$ is of
the form $S\oplus T$ with $S$ projective and $T$ of von Neumann dimension $0$;
the trace of $\phi$ is then defined as the usual von Neumann trace of the
composite $S\rightarrow M\rightarrow M\rightarrow S$ (for the trace of
selfmaps of finitely generated projective $\mathcal{N}\!G$--modules see
L\"uck \cite{Lueck}). It follows that $\mathrm{{tr}}_{G}(\mathrm{id}_{M})=\dim
_{G}(M)$, the von Neumann dimension of the finitely presented $\mathcal{N}%
\!G$--module $M$ (a non-negative real number cf \cite{Lueck}). The
Kaplansky trace (as defined in \fullref{Rewiew}) induces a trace on
$G$--maps $\psi\co P\rightarrow P$ of finitely generated projective $\mathbb{Z}%
G$--modules, and
\[
{\kappa}(\psi)=\mathrm{{tr}}_{G}(\mathrm{id}_{\mathcal{N}\!G}\otimes\psi)
\]
where $\mathrm{id}_{\mathcal{N}\!G}\otimes\psi\co \mathcal{N}\!G\otimes
_{\mathbb{Z}G}P\rightarrow\mathcal{N}\!G\otimes_{\mathbb{Z}G}P$.

Now let $Z$ be a free $G$--CW--complex that is dominated by a
cocompact $G$--CW--complex (for example, the universal cover of a
finitely dominated CW--complex with fundamental group $G$). A
$G$--map $\tilde{f}\co Z\rightarrow Z$ induces a map of singular chain
complexes $C_{\ast}(Z)\rightarrow C_{\ast}(Z)$ and of
$L^{2}$--chain complexes
\[
C_{\ast}^{(2)}(Z):=\mathcal{N}\!G\otimes_{\mathbb{Z}G}C_{\ast}(Z)\rightarrow
C_{\ast}^{(2)}(Z)\text{,}%
\]
and therefore of $L^{2}$--homology groups
\[
H_{n}^{(2)}(Z):=H_{n}(Z;\,\mathcal{N}\!G)\rightarrow H_{n}^{(2)}(Z).
\]
The groups $H_{n}^{(2)}(Z)$ are finitely presented $\mathcal{N}\!G$--modules,
because the complex $C_{\ast}^{(2)}\!(Z)$ is chain homotopy equivalent to a
complex of type FP over $\mathcal{N}\!G$ and because the category of finitely
presented $\mathcal{N}\!G$--modules is known to be abelian \cite{Lueck}. Thus the induced map
$$\wtilde{f}_n\co H_n^{(2)}(Z)\to H_n^{(2)}(Z)$$
is a selfmap of a finitely presented $\mathcal{N}\!G$--module and has, therefore, a well defined trace $\mathrm{{tr}}_G(\wtilde{f}_n)$ as explained in the beginning of this section; we also write $\beta_n^{(2)}(Z;G)$ for the von Neumann dimension of $H_n^{(2)}(Z)$.

Let $Y$ be a finitely dominated CW--complex with fundamental group $G$ and
universal cover $Z$. Then the $n$\thinspace th $L^{2}$--Betti number $\beta
_{n}^{(2)}(Y)$ of $Y$ is defined to be $\beta_{n}^{(2)}(Z;G)$. (If $Y$ happens
to be a finite complex, this reduces to the usual $L^{2}$--Betti number of $Y$
as defined for instance in Atiyah \cite{atiyah} and Eckmann \cite{EM}.) By definition, the
alternating sum $\sum(-1)^{i}\beta_{i}^{(2)}(Y)=\chi^{(2)}(Y)$ is the $L^{2}%
$--Euler characteristic of $Y$. Recall that for $Y$ a finite complex,
$\chi(Y)=\chi^{(2)}(Y)$ by Atiyah's formula \cite{atiyah}; see also
Chatterji--Mislin \cite{us}
and \fullref{wBass=Euler} below for more general results. We now define
$L^{2}$--Lefschetz numbers as follows.

\begin{Definition}
Let $Z$ be be a free $G$--CW--complex that is dominated by a cocompact
$G$--CW--complex and let $\wtilde{f}\co Z\rightarrow Z$ be a $G$--map.
Denote by ${\wtilde{f}}_n\co H_{n}^{(2)}(Z)\rightarrow H_{n}^{(2)}(Z)$ the induced map in $L^{2}%
$--homology. Then the $L^{2}$\emph{--Lefschetz number }of $\wtilde{f}$ is given by
\[
L^{(2)}(\wtilde{f}):=\sum_{n\geq0}(-1)^{n}\mathrm{tr}_{G}(\wtilde{f}_n)\in
\mathbb{R}.
\]
\end{Definition}

In case $Z$ is cocompact our $L^{2}$--Lefschetz number agrees with the one
defined by L\"uck and Rosenberg \cite[Remark~1.7]{LR}. If $Y$ is a
finitely dominated connected CW--complex with fundamental group $G$, and with
universal cover the free $G$--space $\tilde{Y}$, then the $L^{2}$--Lefschetz
number of the identity map of $\tilde{Y}$ is $\chi^{(2)}(\tilde{Y}%
;\,G)=\chi^{(2)}(Y)$, the $L^{2}$--Euler characteristic of $Y$.

\section[Proof of \ref{weakelyprincipal}]
  {Proof of \fullref{weakelyprincipal}}\label{Proof 2}

Let $Y$ be a finitely dominated connected CW--complex. Thus $\chi(Y)$ and
$\chi^{(2)}(Y)$ are defined as above, and are related as follows.

\begin{Lemma}
\label{wBass=Euler}Let $G$ be a finitely presented group. Then the following holds.

\begin{itemize}
\item[\rm(a)]Let $Y$ be a finitely dominated connected CW--complex with
fundamental group $G$. If the finiteness obstruction $\tilde{w}(Y)\in\tilde
{K}_{0}(\mathbb{Z}G)$ is a torsion element, then $\chi^{(2)}(Y)=\chi(Y)$.

\item[\rm(b)] The following are equivalent.

\begin{enumerate}
\item[\rm(i)]The weak Bass conjecture holds for $G$.

\item[\rm(ii)] For any finitely dominated connected CW--complex $Y$ with $\pi
_{1}(Y)=G$, we have
\[
\chi^{(2)}(Y)=\chi(Y).
\]
\end{enumerate}
\end{itemize}
\end{Lemma}

\begin{proof}
As in \fullref{hi}, for $Y$ finitely
dominated, the chain complex $C_{\ast}(\tilde{Y})$ is chain homotopy
equivalent to a chain complex $P_{\ast}$ of type FP over $\mathbb{Z}G$,
$G=\pi_{1}(Y)$, and we have the Wall element
\[
w(Y)=\sum_{i=0}^{n}(-1)^{i}[P_{i}]\in K_{0}(\mathbb{Z}G).
\]
As $\mathcal{N}\!G\otimes_{\mathbb{Z}G}P_{\ast}\simeq C^{(2)}(\tilde{Y})$ and
$\mathrm{{tr}}_{G}(\mathcal{N}\!G\otimes_{\mathbb{Z}G}P_{i})=\kappa(P_{i})$,
we see that
\[
\chi^{(2)}(Y)=\sum(-1)^{i}\mathrm{{tr}}_{G}(\mathcal{N}\!G\otimes
_{\mathbb{Z}G}P_{i})=\sum(-1)^{i}\kappa(P_{i})=\kappa(w(Y)).
\]
On the other hand, $\mathbb{Z}\otimes_{\mathbb{Z}G}P_{\ast}\simeq C_{\ast}(Y)$
and $\epsilon(P_{i})=\mathrm{dim}_{\mathbb{Q}}(\mathbb{Q}\otimes_{\mathbb{Z}%
G}P_{i})$ so that
\[
\chi(Y)=\sum(-1)^{i}\mathrm{dim}_{\mathbb{Q}}(\mathbb{Q}\otimes_{\mathbb{Z}%
G}P_{i})=\sum(-1)^{i}\epsilon(P_{i})=\epsilon(w(Y)).
\]

(a)\qua Again we observe that for $n>1$,
\[
\chi(Y\vee(\vee^{k}S^{n}))=\chi(Y)+(-1)^{n}k
\]
and
\[
\chi^{(2)}(Y\vee(\vee^{k}S^{n}))=\chi^{(2)}(Y)+(-1)^{n}k.
\]
On the other hand, $w(Y\vee(\vee^{k}S^{n}))=w(Y)+(-1)^{n}k$, so that without
loss of generality we may assume that actually $w(Y)$ is a torsion element.
But then, since the range of the Hattori--Stallings trace is torsion-free,
Bass' conjectures are valid for torsion elements of $K_{0}(\mathbb{Z}G)$ and
we have%
\[
\kappa(w(Y))=\epsilon(w(Y))\text{.}%
\]
Thus,
\[
\chi^{(2)}(Y)=\chi(Y),
\]
proving the claim.

(b)\qua (i) $\Rightarrow$ (ii): Assuming the weak Bass
conjecture, we have
\[
\chi^{(2)}(Y)=\kappa(w(Y))=\epsilon(w(Y))=\chi(Y).
\]
Assuming the Bass conjecture, this implication has also been proved by Eckmann
\cite{EN}.

(ii) $\Rightarrow$ (i): Recall from \fullref{all elts obstructions}
that for a finitely generated projective $\mathbb{Z}G$--module
$P$ there is always a finitely dominated CW--complex $Y$ whose Wall element
$w(Y)$ equals $[P]\in K_{0}(\mathbb{Z}G)$. We then have that
\[
\kappa(P)=\kappa(w(Y))=\chi^{(2)}(Y)=\chi(Y)=\epsilon(w(Y))=\epsilon(P).
\proved
\]
\end{proof}

Next, we need an intermediate result.

\begin{Lemma}
\label{Lef=Euler}Let $X$ be a finite connected complex, and
$f\co X\rightarrow X$ be a homotopy idempotent. Let $Y$ be a finitely
dominated CW--complex determined by $f$ as in \fullref{hi}.

\begin{enumerate}
\item[\rm(a)]Then $L(f)=\chi(Y)$.

\item[\rm(b)] If moreover, $f$ is a pointed homotopy idempotent inducing the
identity on $G=\pi_1(X)$ and $\wtilde{f}$ denotes the induced $G$--map on the universal cover of $X$, then $L^{(2)}(\wtilde{f})=\chi^{(2)}(Y)$.
\end{enumerate}
\end{Lemma}

\begin{proof} (a)\qua Applying $H_{i}(\,-\,;\,\mathbb{Q})$ to the
diagram \eqref{Heller} of \fullref{hi} yields the following commutative
diagram of groups:
\[
\xymatrix{
H_i(X)\ar[rr]^{f_i}\ar[dr]_{u_{i}}\ar@(ur,ul)[rrrr]^{f_i}& &H_i(X)\ar[rr]^{f_{i}}
\ar[dr]_{u_{i}}&  &H_i(X)\\
&H_i(Y)\ar[rr]^{{\mathrm{id}}_i}\ar[ur]_{v_{i}} & &H_i(Y)\ar[ur]_{v_{i}}.&
}%
\]
We can now compute that
\[
\mathrm{{Tr}}(f_{i})=\mathrm{{Tr}}(v_{i}u_{i})=\mathrm{{Tr}}%
(u_{i}v_{i})=\mathrm{{Tr}}(\mathrm{id}_{i})=\dim(H_{i}(Y;\,\mathbb{Q}%
))\text{,}%
\]
so that (a) follows by taking alternating sums.

(b)\qua Here we need to know that $f$ induces the identity map
on $\pi_{1}(X)$, in order to obtain equivariance of the induced maps on the
universal covers $\tilde{X}$ and $\tilde{Y}$. We apply $H_{i}^{(2)}$ to
diagram \eqref{Heller}:
\[
\xymatrix{
\Hei(\tilde{X})\ar[rr]^{\wtilde{f}_i}\ar[dr]_{\wtilde{u}_{i}}\ar@(ur,ul)[rrrr]^{\wtilde{f}_i}&
&\Hei(\tilde{X})\ar[rr]^{\wtilde{f}_{i}}\ar[dr]_{\wtilde{u}_{i}%
}&  &\Hei(\tilde{X})\\
&\Hei(\tilde{Y})\ar[rr]^{{\mathrm{id}}_i}\ar[ur]_{\wtilde{v}_{i}}%
& &\Hei(\tilde{Y})
\ar[ur]_{\wtilde{v}_{i}},& }%
\]
and compute
\[
\mathrm{{tr}}_{G}(\wtilde{f}_{i})=\mathrm{{tr}}_{G}(\wtilde{v}%
_{i}\wtilde{u}_{i})=\mathrm{{tr}}_{G}(\wtilde{u}_{i}\wtilde
{v}_{i})=\mathrm{{tr}}_{G}(\mathrm{id}_{i})=\dim_{G}(H_{i}^{(2)}(\tilde
{Y}))\text{,}%
\]
and take alternating sums. The desired equality uses the fact that, given two
finitely presented $\mathcal{N}\!G$--modules $A$ and $B$, with two maps
$\phi\co A\rightarrow B$ and $\psi\co B\rightarrow A$, then $\mathrm{{tr}}_{G}%
(\phi\psi)=\mathrm{{tr}}_{G}(\psi\phi)$.
\end{proof}

\begin{Proposition}
Let $G$ be a finitely presented group. The following are equivalent.

\begin{itemize}
\item[\rm(a)]$G$ satisfies the weak Bass conjecture.

\item[\rm(b)] Every pointed homotopy idempotent selfmap of a closed, smooth and
oriented manifold $M$ with $\pi_{1}(M)=G$ and inducing the identity on $G$ has
its Lefschetz number equal to the $L^{2}$--Lefschetz number of the induced
$G$--map on the universal cover
of $M$.
\end{itemize}
\end{Proposition}

\begin{proof}
That (a) implies (b) follows from the
implication (a) $\Rightarrow$ (b) in \fullref{wBass=Euler}, combined
with \fullref{Lef=Euler}. To prove that (b) implies (a), namely that the
Lefschetz number information on manifolds is enough to imply the weak Bass
conjecture, it suffices to see that for a finite connected complex
$X$ of dimension $n\geq2$ there are a closed smooth oriented manifold
$M$ and maps $X\rightarrow M$, $M\rightarrow X$ inducing isomorphisms
of the fundamental groups, and such that $X\rightarrow M\rightarrow X$
is pointed homotopic to $\mathrm{id}_{X}$. However, this was already
discussed in \fullref{mani}. We then conclude by combining the implication
(b) $\Rightarrow$ (a) in \fullref{wBass=Euler} with \fullref{Lef=Euler}.
\end{proof}

Finally, we turn to the proof of \fullref{weakelyprincipal}. That (a)
implies (b) follows from the previous proposition, which also shows that (b)
implies (a) for all finitely presented groups, and therefore for all groups
via \fullref{GeoBass}. \hfill\qed

\begin{rem}
For each group $G$, it is evident that the algebraic statement of
\fullref{Bass}
for $G$ implies the statement of \fullref{WeakBass}. For our geometric formulations
of the conjectures, the implication is less clear. One can approach this
problem via work of L\"{u}ck and Rosenberg on computing $L^{2}$--Lefschetz
numbers and local degrees \cite{LR}.
\end{rem}

\bibliographystyle{gtart}
\bibliography{link}

\begin{thebibliography}{}
\providecommand\bibmarginpar{\leavevmode\marginpar}
\def\urlstyle#1{{\tt #1}}

\bibitem{atiyah}
\textbf{M\,F Atiyah}, \emph{Elliptic operators, discrete groups and von Neumann
  algebras}, from: ``Colloque ``Analyse et Topologie'' en l'Honneur de Henri
  Cartan (Orsay, 1974)'', Soc. Math. France, Paris (1976)  43--72.
  Ast\'erisque, No. 32--33 \xox{MR}{0420729}

\bibitem{B}
\textbf{H Bass}, \emph{Euler characteristics and characters of discrete
  groups}, Invent. Math. 35 (1976) 155--196 \xox{MR}{0432781}

\bibitem{BCM}
\textbf{A\,J Berrick}, \textbf{I Chatterji}, \textbf{G Mislin},
  \href{http://dx.doi.org/10.1007/s00208-004-0521-6} {\emph{From acyclic groups
  to the Bass conjecture for amenable groups}}, Math. Ann. 329 (2004) 597--621
  \xox{MR}{2076678}

\bibitem{Brown}
\textbf{R\,F Brown}, \emph{The Lefschetz fixed point theorem}, Scott, Foresman
  and Co., Glenview, Ill.-London (1971) \xox{MR}{0283793}

\bibitem{us}
\textbf{I Chatterji}, \textbf{G Mislin}, \emph{Atiyah's $L^2$--index theorem},
  Enseign. Math. $(2)$ 49 (2003) 85--93 \xox{MR}{1998884}

\bibitem{E}
\textbf{B Eckmann}, \emph{Cyclic homology of groups and the Bass conjecture},
  Comment. Math. Helv. 61 (1986) 193--202 \xox{MR}{856086}

\bibitem{EN}
\textbf{B Eckmann}, \href{http://dx.doi.org/10.1007/s000140050019}
  {\emph{Projective and Hilbert modules over group algebras, and finitely
  dominated spaces}}, Comment. Math. Helv. 71 (1996) 453--462 \xox{MR}{1418948}

\bibitem{EM}
\textbf{B Eckmann}, \emph{Introduction to $l_2$--methods in topology: reduced
  $l_2$--homology, harmonic chains, $l_2$--Betti numbers}, Israel J. Math. 117
  (2000) 183--219 \xox{MR}{1760592}

\bibitem{EId}
\textbf{B Eckmann}, \href{http://dx.doi.org/10.1007/s000130050565}
  {\emph{Idempotents in a complex group algebra, projective modules, and the
  von Neumann algebra}}, Arch. Math. $($Basel$)$ 76 (2001) 241--249
  \xox{MR}{1825003}

\bibitem{Emm}
\textbf{I Emmanouil}, \href{http://dx.doi.org/10.1007/s002220050225} {\emph{On
  a class of groups satisfying Bass' conjecture}}, Invent. Math. 132 (1998)
  307--330 \xox{MR}{1621420}

\bibitem{Geo:LNM}
\textbf{R Geoghegan}, \emph{Fixed points in finitely dominated compacta: the
  geometric meaning of a conjecture of H Bass}, from: ``Shape theory and
  geometric topology (Dubrovnik, 1981)'', Lecture Notes in Math. 870, Springer,
  Berlin (1981)  6--22 \xox{MR}{643519}

\bibitem{Geo:Hbk}
\textbf{R Geoghegan}, \emph{Nielsen fixed point theory}, from: ``Handbook of
  geometric topology'', North-Holland, Amsterdam (2002)  499--521
  \xox{MR}{1886677}

\bibitem{G}
\textbf{S\,M Gersten}, \emph{On class groups of free products}, Ann. of Math.
  $(2)$ 87 (1968) 392--398 \xox{MR}{0224655}

\bibitem{HH}
\textbf{H\,M Hastings}, \textbf{A Heller},
  \href{http://links.jstor.org/sici?sici=0002-9939(198208)85:4%3C619:HIOFCS%3E%
2.0.CO%3B2-N} {\emph{Homotopy idempotents on finite-dimensional complexes
  split}}, Proc. Amer. Math. Soc. 85 (1982) 619--622 \xox{MR}{660617}

\bibitem{Jiang-AmJM1980}
\textbf{B\,J Jiang},
  \href{http://links.jstor.org/sici?sici=0002-9327(198008)102:4%3C749:OTLNOF%3%
E2.0.CO%3B2-9} {\emph{On the least number of fixed points}}, Amer. J. Math. 102
  (1980) 749--763 \xox{MR}{584467}

\bibitem{Jiang-LNM886}
\textbf{B\,J Jiang}, \emph{Fixed point classes from a differential viewpoint},
  from: ``Fixed point theory (Sherbrooke, Que., 1980)'', Lecture Notes in Math.
  886, Springer, Berlin (1981)  163--170 \xox{MR}{643005}

\bibitem{Jiang-Lectures}
\textbf{B\,J Jiang}, \emph{Lectures on Nielsen fixed point theory}, Contemp.
  Math. 14, Amer. Math. Soc., Providence, R.I. (1983) \xox{MR}{685755}

\bibitem{Jiang:non-Wecken}
\textbf{B\,J Jiang}, \href{http://dx.doi.org/10.1016/0166-8641(93)90138-4}
  {\emph{Commutativity and Wecken properties for fixed points on surfaces and
  $3$--manifolds}}, Topology Appl. 53 (1993) 221--228 \xox{MR}{1247678}

\bibitem{K}
\textbf{M\,R Kelly},
  \href{http://projecteuclid.org/getRecord?id=euclid.pjm/1102699902}
  {\emph{Minimizing the number of fixed points for self-maps of compact
  surfaces}}, Pacific J. Math. 126 (1987) 81--123 \xox{MR}{868607}

\bibitem{Kelly-new}
\textbf{M\,R Kelly}, \emph{The Nielsen fixed point structure for homotopy
  idempotents on surfaces}, from: ``Topological and asymptotic aspects of group
  theory'', Contemp. Math. 394, Amer. Math. Soc., Providence, RI (2006)
  191--196 \xox{MR}{2216715}

\bibitem{Lueck}
\textbf{W L\"uck}, \emph{$L^{2}$--Invariants: Theory and Applications to
  Geometry and $K$--Theory}, Ergebn. Math. Grenzgeb. 44, Springer (2002)

\bibitem{LR}
\textbf{W L{\"u}ck}, \textbf{J Rosenberg}, \emph{The equivariant Lefschetz
  fixed point theorem for proper cocompact $G$--manifolds}, from:
  ``High-dimensional manifold topology'', World Sci. Publishing, River Edge, NJ
  (2003)  322--361 \xox{MR}{2048727}

\bibitem{Milnor}
\textbf{J Milnor},
  \href{http://links.jstor.org/sici?sici=0002-9327(196201)84:1%3C1:AUDTF3%3E2.%
0.CO%3B2-G} {\emph{A unique decomposition theorem for $3$--manifolds}}, Amer.
  J. Math. 84 (1962) 1--7 \xox{MR}{0142125}

\bibitem{OG}
\textbf{G Mislin}, \emph{The geometric realization of Wall obstructions by
  nilpotent and simple spaces}, Math. Proc. Cambridge Philos. Soc. 87 (1980)
  199--206 \xox{MR}{553576}

\bibitem{Mislin-Handbook}
\textbf{G Mislin}, \emph{Wall's finiteness obstruction}, from: ``Handbook of
  algebraic topology'', North-Holland, Amsterdam (1995)  1259--1291
  \xox{MR}{1361911}

\bibitem{Schirmer}
\textbf{H Schirmer},
  \href{http://projecteuclid.org/getRecord?id=euclid.pjm/1102867409}
  {\emph{Mappings of polyhedra with prescribed fixed points and fixed point
  indices}}, Pacific J. Math. 63 (1976) 521--530 \xox{MR}{0413083}

\bibitem{Shi}
\textbf{G-H Shi}, \emph{On least number of fixed points and Nielsen numbers},
  Chinese Math.--Acta 8 (1966) 234--243 \xox{MR}{0210109}

\bibitem{Spanier}
\textbf{E\,H Spanier}, \emph{Algebraic topology}, McGraw-Hill Book Co., New
  York (1966) \xox{MR}{0210112}

\bibitem{Wall}
\textbf{C\,T\,C Wall}, \emph{Finiteness conditions for CW--complexes}, Ann. of
  Math. $(2)$ 81 (1965) 56--69 \xox{MR}{0171284}

\bibitem{Wall:AnnM1966}
\textbf{C\,T\,C Wall}, \emph{Surgery of non-simply-connected manifolds}, Ann.
  of Math. $(2)$ 84 (1966) 217--276 \xox{MR}{0212827}

\bibitem{W}
\textbf{F Wecken}, \href{http://dx.doi.org/10.1007/BF01487386}
  {\emph{Fixpunktklassen III: Mindestzahlen von Fixpunkten}}, Math. Ann. 118
  (1942) 544--577 \xox{MR}{0010281} \xox{JFM}{0027.26503}

\end{thebibliography}

\end{document}